\author{Julia Brandes}
\title{Sums and differences of power-free numbers}
\address{Mathematisches Institut, Bunsenstr. 3--5, 37073 G{\"o}ttingen, Germany}
\email{jbrande@uni-math.gwdg.de}
\subjclass[2010]{11N25 (primary), 11N35 (secondary)}
\keywords{Power-free numbers, power sieve}
\def\res#1#2#3{ {\left( {\frac{#2}{#3}} \right) \negthickspace }_{#1} }
\def\dsum#1#2{ \sum_{\substack{ {#1} \\ {#2} }} }
\def\N{\mathbb{N}}
\def\mmod#1{\;(\mathrm{mod}\;{#1})}
\def\eps{\varepsilon}
\DeclareMathOperator{\card}{Card}
\newtheorem{thm}{Theorem}
\newtheorem{lem}{Lemma}
\theoremstyle{definition}
\begin{document}
\selectlanguage{english}

\begin{abstract}
We employ a generalised version of Heath-Brown's square sieve in order to establish an asymptotic estimate of the number of solutions $a, b$ to the equations $a+b=n$ and $a-b=n$, where $a$ is $k$-free and $b$ is $l$-free. This is the first time that this problem has been studied with distinct powers $k$ and $l$. 
\end{abstract}
\maketitle

\section{Introduction}

Let $A_k \subseteq \mathbb N$ denote the set of $k$-free numbers. We are interested in the number $R^{-}_{k,l}(x;n)$ of twins $a, a-n \leq x$ with a given distance $n$, where $a$ is $k$-free and $a-n$ is $l$-free. A closely related question asks for the number $R^{+}_{k,l}(x;n)$ of solutions $a \in A_k$ and $b \in A_l$ to the equation $a+b=n$, where we may impose the additional size restriction that one of the summands be bounded by a parameter $x$. Observe that trivially $R^-_{k,l}(x;n)=0$ when $n\geq x$ and $R^+_{k,l}(x;n) = R^+_{k,l}(n;n)$ when $x\geq n$. More precisely, $a$ is restricted to an interval of length $X^{\pm}(x;n)$, where $X^{-}(x;n) = \max\{0,x-n\}$ and $X^{+}(x;n) = \min\{x,n\}$, so as the $k$-free numbers are of positive density in the integers,  we expect the main term to be proportional to $X^{\pm}(x;n)$.

When the exponents $k$ and $l$ are equal these are well-studied problems, and it is not hard to establish an asymptotic formula. In fact, if $\omega(k)$ denotes the least number $\omega$ satisfying 
\begin{align*}
  R^{\pm}_{k,k}(x;n) = X^{\pm}(x;n) \prod_{\substack{p \text{ prime}\\ p^{k}|n}}\left( \frac{p^{k}-1}{p^{k}-2} \right) \prod_{p \text{ prime}}(1-2p^{-k})  + O\left( (x+n)^{\omega+ \eps}\right),
\end{align*}
then elementary methods (see Carlitz \cite{carl} for $R^{-}_{2,2}(x;1)$, and  Evelyn and Linfoot \cite{evelyn2} and Estermann \cite{estermann} for $R^{+}_{k,k}(n;n)$) suffice to show ${\omega(k) \leq 2/(k+1)}$. Interestingly enough, this still seems to be the state of the art in the case of $R^{+}_{k,k}(n;n)$, while for $R^{-}_{k,k}(x;1)$ better error terms are available. In fact, Heath-Brown \cite{hb} used sieve methods to establish $\omega(2) \leq 7/11$, an approach that has later been generalised to higher powers in the author's diploma thesis \cite{diplom}, yielding an error term\footnote{The original work contains an oversight in the numerical analysis, which led to a different exponent. This error is corrected in the present paper.} of
\begin{align*}
\omega(k) \leq \begin{cases} 14/(7k+8) & \text{ if } k \leq 4, \\[2mm] \displaystyle{\frac{8k+3}{4k^2+6k+2}} & \text{ if } k \geq 5.\end{cases}
\end{align*}
The advent of the determinant method triggered further improvements. Dietmann and Marmon \cite{rd+marmon} established $\omega(k) \leq 14/(9k) $, while independently Reuss~\cite{reuss} reduced the exponent to $\omega(2) \leq (26+ \sqrt{433})/81 \approx 0.578$ and ${\omega(k) \leq 169/(144k)}$ for larger values of $k$. Both results are much stronger than~\cite{diplom}.\\

It is now natural to ask whether these methods continue to be applicable when the powers are distinct. However, except for a passing remark by Br\"udern et al. \cite{jb++} claiming that their approach to study $R^+_{k,k}(n;n)$ via the circle method can be adapted to distinct powers, the author is not aware of any previous work regarding this more general setting. 
In this note, we show how the power sieve as developed by Heath-Brown \cite{hb} in the case $k=2$ and extended to higher powers independently by the author \cite{diplom} and Munshi \cite{munshi} can be adapted to obtain an asymptotic formula for $ R^{\pm}_{k,l}(x;n)$. 
Let
\begin{align*}
  C_{k,l}(n)=\prod_{\substack{p \text{ prime}\\ p^{k}|n}}\left( \frac{p^{k+l}-p^{l}}{p^{k+l}-p^k-p^l} \right) \quad \text{ and } \quad \mathfrak S_{k,l}=\prod_{p \text{ prime}}(1-p^{-k}-p^{-l}),
\end{align*}
then the result is the following.
\begin{thm}\label{thm}
Suppose $2 \leq k\leq l$, then the counting functions $R^{+}_{k,l}(x;n)$ and $R^{-}_{k,l}(x;n)$ obey the asymptotic formula
    \begin{align*}
      R^{\pm}_{k,l}(x;n) = X^{\pm}(x;n) C_{k,l}(n) \mathfrak S_{k,l} + O\big((x+n)^{\omega(k,l) + \eps}\big),
    \end{align*}
    where the exponent $\omega(k,l)$ in the error term is bounded by
    \begin{align*}
      \omega(k,l) \leq \begin{cases}
         \displaystyle{\frac{4(k+l)+3}{2(2k+1)(l+1)}}& \text{ if }  k \leq l \leq (4/3)(k-1),  \\[3mm]
         \displaystyle{\frac{7(k+l)}{6k+2l+7kl}} & \text{ if } (4/3)(k-1) \leq l \leq 4k,  \\[3mm]
         \displaystyle{\frac{k+l}{k(l+2)}} & \text{ if } l \geq 4k,
      \end{cases}
      \end{align*}
      and the implied constant depends on $k$, $l$ and $\eps$, but is independent of $x$ and $n$.
In the case $k>l$ the same asymptotic formula is true with the roles of $k$ and $l$ reversed.
\end{thm}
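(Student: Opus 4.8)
The plan is to count solutions to $a\pm b=n$ with $a\in A_k$, $b\in A_l$, $a\le x$, by using the indicator of $k$-freeness via the Möbius-type characteristic $\sum_{d^k\mid a}\mu(d)$. First I would insert these characteristic functions for both variables, writing
\begin{align*}
R^{\pm}_{k,l}(x;n)=\sum_{\substack{a\pm b=n\\ a\le x}}\Big(\sum_{d^k\mid a}\mu(d)\Big)\Big(\sum_{e^l\mid b}\mu(e)\Big),
\end{align*}
and then split the ranges of $d$ and $e$ at suitable parameters. The contribution from small $d$ and $e$ — say $d\le D$ and $e\le E$ for parameters $D,E$ to be optimised — is the main term: here one counts integers in an interval of length $X^{\pm}(x;n)$ lying in prescribed residue classes modulo $d^k$ and $e^l$, which by the Chinese Remainder Theorem and a standard lattice-point count produces the density $\mathfrak S_{k,l}$ together with the local correction factor $C_{k,l}(n)$ arising from the arithmetic of $n$ modulo the relevant prime powers, with an acceptable error. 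The remaining ranges $d>D$ or $e>E$ constitute the tails that must be shown to be small.

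The heart of the argument, and the point where Heath-Brown's power sieve enters, is the estimation of the tail where \emph{both} $d$ and $e$ are large (the ranges where only one variable is large are handled by cruder divisor-type bounds, since a large square-power divisor forces the variable into a sparse set). Here I would detect the condition $d^k\mid a$ using the power sieve: the generalised square-sieve inequality controls a sum over a large modulus by averaging additive characters, reducing the count of solutions $a$ with $d^k\mid a$ to estimates for exponential sums. The key quantity becomes a bilinear form in which one sums a product of two complete exponential sums — one of modulus a power of $k$ and one of modulus a power of $l$ — over dyadic ranges $K<k\le 2K$ of the sieving variable, exactly the structure foreshadowed by the \texttt{ksum} and \texttt{vsum} macros and the power-residue symbol $\res{}{}{}$ in the preamble. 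Bounding these character sums via Weil-type estimates, and then summing over the dyadic parameters, yields a bound for the double tail that is polynomial in $x+n$ with an exponent depending on $D$, $E$, $k$ and $l$.

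I expect the main obstacle to be the optimisation of the sieve parameters $D$ and $E$ against the three competing error contributions (main-term remainder, single tails, double tail) in a way that is uniform across the full range $2\le k\le l$. Because the relative sizes of $d^k$ and $e^l$ differ wildly depending on how $l$ compares to $k$, the optimal choice of parameters — and indeed which error term dominates — changes across thresholds, which is precisely why the final exponent $\omega(k,l)$ is given by a three-case piecewise formula with breakpoints at $l=(4/3)(k-1)$ and $l=4k$. Carrying out the case analysis correctly, and in particular balancing the Weil-bound savings in the double tail against the loss incurred in the square-sieve reduction, is where the earlier numerical oversight mentioned in the footnote presumably occurred, so I would pay special attention to tracking the exponents through each regime. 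Once the parameters are chosen optimally in each range, collecting the error terms gives the stated bound, and the case $k>l$ follows by symmetry upon swapping the roles of the two variables.
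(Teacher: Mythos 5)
Your overall skeleton --- M\"obius inversion in both variables, a main term extracted from small moduli, the power sieve plus Weil bounds for the tail, and a closing optimisation --- is indeed the strategy of the paper. But at the one place where the real idea is required your sketch has a genuine gap: the power sieve does \emph{not} detect the condition $d^k\mid a$ for $d$ in a large dyadic range. What the sieve detects is that a member of a sequence of integers is a perfect $k$-th power. To make it applicable one must first write $a=hu^k$, observe that then $h^{k-1}a=(hu)^k$ \emph{is} a perfect $k$-th power, decompose the cofactor $h\ll xU^{-k}$ into dyadic ranges $H<h\le 2H$, and, for each \emph{fixed} $h$, sieve the sequence of weights supported on the integers $h^{k-1}(n\mp jv^l)$ (up to a sign normalisation), with $V<v\le 2V$, $j$ in a suitable interval, and subject to the side condition $h\mid (n\mp jv^l)$. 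This cofactor-fixing reduction is the heart of Heath-Brown's method and of the paper; without it the phrase ``detect the condition $d^k\mid a$ using the power sieve'' is not an argument, and nothing in your proposal supplies a substitute.

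Second, your description of what the sieve produces is not what actually arises, which suggests the gap is structural rather than notational. The sieve averages \emph{multiplicative} ($k$-th power residue) characters over pairs of distinct primes $p,q\in[Q,2Q]$ with $p\equiv q\equiv 1\mmod{k}$ and $p,q\nmid hn$; additive characters enter only afterwards, when the incomplete sum over $(v,j)$ is completed to modulus $hpq$, and the resulting complete sums factor into pieces of moduli $p$, $q$ and the prime powers $r^i\,\|\,h$ --- there are no sums ``of modulus a power of $k$'' or ``of $l$''. Moreover, the stated exponents do not follow from the sieve bound alone: the paper couples it, over the \emph{same} dyadic ranges, with an elementary Hensel/divisor bound $N\ll x^{1+\eps}(U^{-k}V+U^{1-k}V^{1-l})$, and then interpolates between the two bounds (using the normalisation $U^k\ge V^l$ and $UV\gg y$ to trade $U$ against $V$), which is exactly where the three cases and the breakpoints $l=(4/3)(k-1)$ and $l=4k$ come from. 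Your plan of assigning disjoint regions (``single tails'' by divisor bounds, ``double tail'' by the sieve) is not what is needed, and as stated it comes with no computation that could recover the claimed piecewise exponent; quoting the target exponents is not a derivation of them.
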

Observe that Theorem~\ref{thm} reproduces the result from \cite{diplom} and extends it to the problem of estimating $R^{+}_{k,k}(x;n)$, in which case it is the first improvement on the error term since Estermann's work from 1931 \cite{estermann}. For $x \ll n$ the error remains non-trivial as long as $n^{\omega(k,l)} \ll x$.\\

A word is in order as to why we believe this result to be competitive, in particular in the light of the strong bounds that can be obtained by the determinant method in the case $k=l$. Observe that for $l \geq k$ every $k$-free number is \emph{a forteriori} also $l$-free, so when $l$ is not much larger than $k$ the bounds on $\omega(k)$ obtained by Reuss \cite{reuss} will yield a stronger error term than that of Theorem~\ref{thm}. However, in its current formulation the determinant method fails to properly accommodate distinct powers as it seems to rely crucially on rewriting the equation $u^kh - v^kj=n$ in the shape $(u/v)^{k} - j/h = n/(v^kh)$ and then counting rational points within a distance $O_n((v^{k}h)^{-1})$ from the curve $\alpha^k = \beta$, a transformation impossible when $k$ and $l$ are distinct. This means that when $l$ is sufficiently large in terms of $k$ we should expect Theorem~\ref{thm} to yield a stronger error term than what is being obtained by the determinant method, and indeed, our exponent supersedes that of Reuss~\cite{reuss} as soon as $l$ is larger than roughly $144k/25$. 
Furthermore, in the limit $l \rightarrow \infty$ the exponent in Theorem~\ref{thm} tends to $1/k$, thus replicating the known error term in the asymptotic formula for the number of $k$-free integers. For comparison, the methods of Br\"udern et al. \cite{jb++} yield an error term whose exponent is bounded below by $1/2$.\\

Throughout the argument the following conventions will be obeyed. No effort will be made to track the exact `value' of infinitesimal quantities arising in the analysis, so all statements involving $\eps$ are true for any $\eps >0$. The implied constants will be allowed to depend on all parameters except $n$ and $x$. We write $d(n)$ for the number of divisors of an integer $n$, and $(a,b)$ denotes the greatest common divisor of two integers $a$ and $b$. Furthermore, since the analysis differs only very little for $R^+$ and $R^-$ we will write $R^{\pm}$ and use the ambiguous signs $\pm$ and $\mp$ throughout. This will happen in a consistent way, so in $\mp$ the negative sign corresponds to $R^+$ and the positive value arises when $R^-$ is considered.  \\

The author is grateful to the referee for valuable comments.

\section{Elementary considerations}

Recall that our counting function is given by 
\begin{align*}
  R^{\pm}_{k,l}(x;n) &= \left\{a \leq x: \, a \in A_k, \pm(n-a) \in A_l \right\}\hspace{-.5mm}.
\end{align*}
Using the indicator function on $k$-free numbers $\mu_k(n)= \sum_{d^k|n} \mu(d)$, we may rewrite this as
\begin{align*}
		R^{\pm}_{k,l}(x;n) &= \sum_{u,v \leq x} \mu(u) \mu(v) N^{\pm}_{k,l}(x, n; u,v),
\end{align*}
where 
\begin{align*}
  N^{\pm}_{k,l}(x, n; u,v) = \card \{ 1 \leq a \leq x :\;  \pm(n-a)\geq 1, \, u^k|a,\, v^l|(n - a) \}.
\end{align*}
One sees that with our assumption $k \le l$ one has $N^{\pm}_{k,l}(x,n; u,v)=0$ when $(u,v)^{k}$ does not divide $n$ and 
\begin{align*}
N^{\pm}_{k,l}(x, n; u,v) &= X^{\pm}(x; n)\frac{(u,v)^{k}}{u^k v^l} + O(1)
\end{align*}
otherwise. 

For a parameter $y$ to be chosen optimally later the contribution with $uv\leq y$ is given by
\begin{align}\label{uv<y}
   X^{\pm}(x; n)  \hspace{-1mm}\dsum{u,v \in \mathbb N}{(u,v)^{k}|n} \frac{\mu(u) \mu(v)(u,v)^k}{u^k v^l} + O\Bigg(X^{\pm}(x; n) \hspace{-1mm} \dsum{uv>y}{(u,v)^k|n} \frac{(u,v)^k}{u^k v^l}\Bigg) + O\Big(\sum_{uv \leq y}1\Big).
\end{align}
We may rewrite the sum in the main term by extracting common factors and writing $u=u'd$ and $v=v'd$. Note that $\mu(u'd)$ takes a non-zero value only if $u'$ and $d$ are coprime, so we have
\begin{align*}
		  \sum_{(u,v)^{k}|n} \frac{\mu(u) \mu(v)(u,v)^k}{u^k v^l} &= \sum_{d^{k}|n} \frac{\mu^2(d)}{d^{l}} \dsum{(u', v')=1}{(d,u')=(d,v')=1} \frac{\mu(u') \mu(v')}{{u'}^k {v'}^l}\\
		&= \sum_{d^{k}|n} \frac{\mu^2(d)}{d^{l}} \sum_{(h,d)=1} \frac{\mu(h)}{h^k} \sum_{w|h} \frac{1}{w^{l-k}}.
\end{align*}
The sum over $h$ is multiplicative with an Euler product expansion given by
\begin{align*}
  \prod_{p\nmid d}(1-p^{-k}-p^{-l}) = \mathfrak S_{k,l} \prod_{p|d}(1-p^{-k}-p^{-l})^{-1},
\end{align*}
and after expanding the $d$-summation one recovers the constant $C_{k,l}(n)$
for $k \leq l$. The respective result for $l>k$ follows by symmetry. \\

We now turn to the error
\begin{align*}
		 E^{\pm}_{k,l}(x;n) = R^{\pm}_{k,l}(x;n) - X^{\pm}(x; n) \mathfrak S_{k,l} C_{k,l}(n).
\end{align*}
 Using that $X^{\pm}(x; n)\leq x$, we have
\begin{align*}
		 E^{\pm}_{k,l}(x;n) \ll  \sum_{uv \leq y} 1 + x\dsum{uv>y}{(u,v)^k|n} \frac{ (u,v)^k} {u^k v^l} + \dsum{u,v \leq x}{uv>y} \mu(u) \mu(v)N^{\pm}_{k,l}(x,n; u,v),
\end{align*}
where the first two terms are the error terms from \eqref{uv<y} and the last term accounts for the contribution not considered in \eqref{uv<y}. Observe that the first term is $O(y \log y)$ trivially and the second term is bounded above by
\begin{align*}
		  x\dsum{uv > y}{(u,v)^{k}|n} \frac{(u,v)^k}{{u^k v^l}} &\ll x\sum_{d^k|n}d^{-l} \sum_{v=1}^{\infty}v^{-l} \dsum{u \in \N}{u>y/(d^2v)}u^{-k}.
\end{align*}
The inner sum in the above expression is $\ll (d^2v/y)^{k-1}$ if $v\leq y/d^2$ and $\ll 1$ else, whence on the right hand side we have
\begin{align*}
  x\sum_{d^k|n}d^{-l} \sum_{v=1}^{\infty}v^{-l} \dsum{u \in \N}{u>y/(d^2v)}  u^{-k} &\ll xy^{1-k}\sum_{d^k|n}d^{-l+2k-2} \sum_{1 \leq v \leq y/d^2}v^{k-l-1} \\ 
  & \qquad+ x\sum_{d^k|n}d^{-l} \dsum{v \in \N}{v>y/d^2}v^{-l}.
\end{align*}
The $v$-summation in the first term on the right hand side is empty unless $d\leq \sqrt y$, and in the second term we have another case distinction as before according to whether $y/d^2>1$ or not.
It follows that the above is 
\begin{align*}
  &\ll xy^{1-k+\eps}\dsum{d\leq \sqrt y}{d^k|n}d^{-l+2k-2} + xy^{1-l}\dsum{d\leq \sqrt y}{d^k|n}d^{l-2} + x\dsum{d>\sqrt y}{d^k|n}d^{-l}\\
  &\ll xy^{1-k+\eps}\big(1+y^{(2k-l-2)/2}n^{\eps}\big)+ xy^{-l/2}n^{\eps}   \\
  &\ll xy^{1-k+\eps} + xy^{-l/2}n^{\eps}.
\end{align*}
This yields the overall error
\begin{align*}
		 E^{\pm}_{k,l}(x;n)\ll y^{1+\eps} + xy^{1-k+\eps} + xy^{-l/2}n^{\eps} +   \dsum{uv>y}{u,v \le x} \mu(u) \mu(v) N^{\pm}_{k,l}(x, n; u,v).
\end{align*}
The last term will be the focus of our analysis in the following section.

\section{Sieving for powers}
We split the terms with $uv>y$ in roughly $(\log x)^2$ intervals $U < u \leq 2U$ and $V< v \leq 2V$. Then there exist values $U$ and $V$ such that
\begin{align*}
		\sum_{uv>y} \mu(u) \mu(v) N^{\pm}_{k,l}(x,n; u,v) \ll N (\log x)^2,
\end{align*}
where $N$ is given by
\begin{align}\label{eq:def-N}
		N=\card \{ u,v,h,j \!: \, U< u \leq 2U,\, V< v \leq 2V, \, hu^k \pm jv^l =n \}.
\end{align}
Let $N_h$ denote the number of $u,v,j$ counted by $N$ for a fixed value of $h$, and write
\begin{align*}
		N(H) = \sum_{H < h \leq 2H}N_h.
\end{align*}
There are $O(\log x)$ intervals $H < h \leq 2H$ with
\begin{align}\label{eq:U^kH}
		H \ll xU^{-k},
\end{align}
so for some $H$ one has $N \ll  N(H) \log x$ and consequently
\begin{align}\label{eq:error1}
		 E^{\pm}_{k,l}(x;n)\ll y^{1+\eps} + xy^{1-k+\eps} + xy^{-l/2}n^{\eps} + (\log x)^3 N(H).
\end{align}

We may assume without loss of generality that
\begin{align}\label{eq:U^k>V^l}
		U^k \geq V^l.
\end{align}
This might involve swapping the parameters $h$, $u$ and $k$ with $j$, $v$ and $l$, respectively, which in the situation of counting $a-b=n$ inserts a minus sign, so we let $\sigma=-1$ if this sign change occurs and $\sigma=+1$ else. Then, having fixed a suitable value $H$, let
\begin{align*}
		J_1 &= \min \{1,  \pm 2^{-l}V^{-l}(\sigma n-2^{k+1}HU^k)\}, & J_2 &=  \pm V^{-l}(\sigma n-HU^k).
\end{align*}
Note that by construction we have $J_1 < J_2$.
It follows that
\begin{align*}
		N_h& \leq \card \{J_1 \leq j \leq J_2,\, V< v \leq 2V, \, U< u \leq 2U \!: \, hu^k \pm jv^l = \sigma n \}.
\end{align*}

Let $w_k(z)=0$ for all $z$ that are not divisible by $h^{k-1}$, and
\begin{align*}
		w_k(h^{k-1}a) = \card \{ (v,j) \! : \, h|(\sigma n \mp v^lj), a=\sigma n \mp v^lj \},
\end{align*}
where $V < v \leq 2V$ and $ J_1 \leq j \leq J_2$. This is legitimate as $\sigma n \mp v^lj >0$ for all admissible values, and we have
\begin{align*}
		N_h \leq \sum_{m=1}^{\infty} w_k(m^k).
\end{align*}
We now apply the power sieve as in \cite{diplom}. Let $Q$ be a parameter satisfying ${\log x \leq Q \leq x}$ which will be chosen optimally later, and let $\mathcal P$ be the set of primes in the interval $[Q,2Q]$ that do not divide $hn$ and are congruent to $1$ modulo $k$, then by the Siegel--Walfisz Theorem~we have $\card \mathcal P \asymp Q/\log Q$. Write further $\big(\frac{n}{p}\big)_{\hspace{-.5mm}k}$ for a non-principal character modulo $p$ whose $k$-th power is the principal character modulo $p$. That such a character exists is ensured by our choice of the set $\mathcal P$.  The power sieve as in \cite[Lemma~2.1]{munshi} or {\cite[Theorem~1]{diplom}} now yields
\begin{align}\label{eq:sifting}
		N_h & \ll \frac{\log Q}{Q} \sum_{z=1}^{\infty} w_k(z) + \max_{\substack{p, q \in \mathcal P \\ p \neq q}}\left| \sum_{v,j} \res{k}{h^{k-1}(\sigma  n \mp v^lj)}{p} \overline{\res{k}{h^{k-1}(\sigma  n \mp v^lj)}{q}} \right|\hspace{-.5mm},
\end{align}
where the sum is over all $V< v \leq 2V$, $J_1 \leq j \leq J_2$ satisfying $ h| (\sigma  n \mp v^lj)$.
The first term can be estimated by
\begin{align*}
		\frac{\log Q}{Q} \sum_{z=1}^{\infty} w_k(z) &\ll \frac{\log Q}{Q} \sum_{V< v \leq 2V} \dsum{\xi \leq x}{\xi \equiv \sigma  n \mmod{v^l}} d(\xi) \ll Q^{-1+\eps} V^{1-l} x^{1+\eps},
\end{align*}
so the deeper challenge lies in establishing a good bound for the second term. This analysis will follow the treatment of \cite{hb} and \cite{diplom} closely.

Denote the expression inside the absolute values in the second term of \eqref{eq:sifting} by $S$. We have
\begin{align}\label{eq:S}
		S =  (hpq)^{-2} \sum_{\gamma, \delta =1}^{hpq} S(h, pq; \gamma, \delta) \theta_{\gamma} \phi_{\delta},
\end{align}
where
\begin{align}
		S(h, pq; \gamma, \delta) &= \dsum{\alpha, \beta =1}{h| (\sigma  n \mp \alpha^j \beta)}^{hpq} \res{k}{\sigma  n\mp \alpha^l\beta}{p} \overline{\res{k}{\sigma  n\mp \alpha^l\beta}{q}} e\left(\frac{\gamma\alpha+\delta\beta}{hpq}\right)\hspace{-1mm}, \nonumber \\
		\theta_{\gamma} & = \sum_{V< v \leq 2V} e \left( \frac{-\gamma v}{hpq}\right) \ll \min \left(V, \left\| \frac{\gamma}{hpq} \right\|^{-1} \right)\hspace{-1mm}, \label{eq:theta} \\
		\phi_{\delta} & = \sum_{J_1 \leq j \leq J_2} e \left( \frac{-\delta j}{hpq}\right) \ll \min \left(V^{-l}(U^kH+n), \left\| \frac{\delta}{hpq} \right\|^{-1} \right)\hspace{-1mm}. \label{eq:phi}
\end{align}
Writing further
\begin{align*}
		S_1(p; c,d)&=\sum_{\alpha, \beta=1}^p \res{k}{\sigma  n \mp \alpha^l\beta}{p} e \left(\frac{c \alpha + d \beta}{p} \right)\hspace{-1mm},\\
		S_2(r^i; c,d) &= \dsum{\alpha, \beta=1}{r^i|(\sigma  n \mp \alpha^l\beta)}^{r^i} e \left(\frac{c \alpha + d \beta}{r^i} \right)\hspace{-1mm},
\end{align*}
one has 
\begin{align*}
		S(h, pq; \gamma, \delta) = S_1(p; c,d) \overline{S_1(q; -c,-d)} \prod_{r^i \| h} S_2(r^i; c,d)
\end{align*}
where $h=\prod r^i$ denotes the prime power decomposition of $h$, and $c$ and $d$ are certain integers satisfying $(c, hpq)=(\gamma, hpq)$ and $(d, hpq)=(\delta, hpq)$.
\begin{lem}\label{lem:S}
		Suppose $p$ is a prime number not dividing $n$, then the exponential sums $S_1(p; c,d)$ and $S_2(p; c,d)$ can be estimated by
		\begin{align*}
				S_1(p; c,d) & \ll p, \\
				S_2(p; c,d) & \ll p^{1/2}(p,c,d)^{1/2}.
		\end{align*}
\end{lem}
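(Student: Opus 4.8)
The plan is to reduce both exponential sums to complete one-variable sums over $\mathbb F_p^{\times}$ and then invoke Weil's bound. Throughout I would use the hypothesis $p\nmid n$, which guarantees that $\sigma n$ is a unit modulo $p$ and is exactly what keeps the resulting rational functions non-degenerate.

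For $S_2(p;c,d)$ I would first exploit the congruence constraint. Since $p\nmid n$, the relation $\sigma n\mp\alpha^l\beta\equiv0\pmod p$ forces $\alpha\not\equiv0$ and then pins down $\beta$ uniquely as $\beta\equiv\pm\sigma n\,\alpha^{-l}\pmod p$. Substituting this collapses the double sum to the single complete sum
\begin{align*}
  S_2(p;c,d)=\sum_{\alpha=1}^{p-1}\e{c\alpha+d'\alpha^{-l}}{p},
\end{align*}
where $d'\equiv\pm d\sigma n\pmod p$. If $p\nmid d$ then $p\nmid d'$ and the Laurent polynomial $c\alpha+d'\alpha^{-l}$ has a genuine pole of order $l\geq2$ at the origin, so Weil's bound for exponential sums of rational functions yields $S_2\ll p^{1/2}$ with an implied constant depending only on $l$. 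If $p\mid d$ but $p\nmid c$ the sum degenerates to a geometric progression of size $O(1)$, and if $p\mid(c,d)$ the exponential is trivial and $S_2=p-1$. As $(p,c,d)=p$ precisely in this last case and $(p,c,d)=1$ otherwise, these three cases combine to the claimed bound $S_2\ll p^{1/2}(p,c,d)^{1/2}$.

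For $S_1(p;c,d)$ I would separate the contribution of $\alpha\equiv0\pmod p$, which equals $\res{k}{\sigma n}{p}\sum_{\beta}\e{d\beta}{p}$ and is trivially $O(p)$. For $\alpha\not\equiv0$ I would evaluate the inner sum over $\beta$ by the substitution $m\equiv\sigma n\mp\alpha^l\beta\pmod p$: writing $\chi=\res{k}{\cdot}{p}$, it becomes a Gauss sum $\sum_m\chi(m)\e{tm}{p}$ (times an exponential factor in $\alpha^{-l}$) with $t\equiv\mp d\alpha^{-l}$. Since this Gauss sum equals $\overline\chi(t)\tau_\chi$ with $|\tau_\chi|=p^{1/2}$ whenever $p\nmid d$, the whole of $S_1$ reduces, up to a factor of modulus one, to $\tau_\chi$ times the mixed character--exponential sum
\begin{align*}
  T=\sum_{\alpha=1}^{p-1}\chi^l(\alpha)\,\e{c\alpha+d''\alpha^{-l}}{p},\qquad d''\equiv\mp d\sigma n\pmod p.
\end{align*}
When $p\mid d$ the inner Gauss sum vanishes for every $\alpha\not\equiv0$, so $S_1$ reduces to its $\alpha\equiv0$ part and is $O(p)$; when $p\nmid d$ we have $p\nmid d''$, so again the exponential argument carries a genuine pole of order $l$ and Weil's theorem for mixed sums gives $T\ll p^{1/2}$, whence $S_1\ll p^{1/2}\cdot p^{1/2}=p$.

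The technical heart of the argument is the appeal to Weil's bound for the two reduced sums, and the main point to verify is non-degeneracy, namely that neither reduced sum is of the trivial shape admitting no cancellation. Here the hypothesis $p\nmid n$ is decisive: it ensures $p\nmid d',d''$ exactly when the additive twist is present, forcing a pole of order $l$ and hence full square-root saving regardless of whether $\chi^l$ is principal. The remaining low-dimensional cases, in which the exponential or the constraint degenerates when $p\mid c$ or $p\mid d$, are elementary and account precisely for the factor $(p,c,d)^{1/2}$ in the estimate for $S_2$.
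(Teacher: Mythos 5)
Your proof is correct and follows essentially the same route as the paper's own proof, which simply defers to the analogous treatments in Heath-Brown's paper and in \cite{diplom}: there, too, the $\beta$-sum is completed (via the congruence for $S_2$, via a Gauss sum for $S_1$) to reduce everything to one-variable Kloosterman-type sums with a rational function having a pole of order $l$, the degenerate cases $p\mid c$ or $p\mid d$ are handled elementarily, and the generic case rests on the Weil conjectures. Nothing essential is missing.
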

\begin{proof}
		This is analogous to the treatment in \cite[\S5]{hb} or \cite[Lemma~2]{diplom} and is elementary except in the generic case when $p \nmid cd$, in which case it depends on the Weil conjectures. 
\end{proof}
Let $w$ denote the product of all primes coprime to $n$ that divide $h$ exactly once. By following through the arguments of \cite[p.256sq.]{hb} or \cite[p.12]{diplom} one sees that Lemma~\ref{lem:S} implies
\begin{align*}
		\prod_{r^i \| h} S_2(r^i; c,d) \ll Hw^{-1/2}(w, \gamma, \delta)^{1/2},
\end{align*}
whence we obtain
\begin{align*}
		S(h, pq;\gamma, \delta) \ll Q^2Hw^{-1/2+\eps}(w, \gamma, \delta)^{1/2}.
\end{align*}
Inserting the definitions \eqref{eq:S}, \eqref{eq:theta} and \eqref{eq:phi} of $S$, $\theta_\gamma$ and $\phi_\delta$, respectively, together with  \eqref{eq:U^kH}, one concludes that
\begin{align*}
		S &\ll Q^{-2}H^{-1}w^{-1/2 + \eps} \Bigg( V^{1-l}(x+n) w^{1/2} +VQ^2H\sum_{1 \leq \delta\leq hpq/2} \delta^{-1}(w,\delta)^{1/2}      \\
& \quad + V^{-l}Q^2H(x+n)\sum_{1 \leq \gamma \leq hpq/2}\gamma^{-1} (w, \gamma)^{1/2}   + Q^4H^2 \sum_{\gamma,\delta} (\gamma\delta)^{-1}(w, \gamma, \delta)^{1/2}\Bigg) \\
& \ll  Q^{-2}H^{-1}V^{1-l}(x+n)w^{\eps}  + \left(V^{-l}(x+n)+V+HQ^2 \right)w^{-1/2+\eps} (\log HQ^2)^2.
\end{align*}

We can now estimate our sifted sum $N(H)$. By an argument similar to that in  \cite[p.257]{hb} or \cite[p.13]{diplom} one has $\sum_{H < h \leq 2H} w^{-1/2+\eps} \ll H^{1/2+\eps}$, so $N(H)$ is bounded by
\begin{align*}
	 x^{\eps}\big(Q^{-1} V^{1-l}x  + (x+n)Q^{-2}V^{1-l}+  (V^{-l}(x+n) + V + HQ^2)H^{1/2} \big).
\end{align*}
After recalling \eqref{eq:U^kH}, it follows that
\begin{align}\label{error1}
		N &\ll  Q^{-1} V^{1-l}(x+n)^{1+\eps}  + x^{1/2+\eps}(x+n)U^{-k/2}V^{-l} \nonumber\\ 
		& \qquad+ x^{1/2+\eps} U^{-k/2}V + x^{3/2+\eps}U^{-3k/2}Q^2.
\end{align}
It remains to optimise the parameters in order to establish the theorem.

\section{A game of optimisation}

In this section it is our task to optimise the parameters in the error 
\begin{align*}
 E^{\pm}_{k,l}(x;n)&\ll y^{1+\eps} + xy^{1-k+\eps} + xy^{-l/2}n^{\eps} + x^\eps N,
\end{align*}
where $N$ is bounded by the expression in \eqref{error1}. We will write $z=x+n$ and use $x \ll z$ throughout. 
With this convention the first and the last term of \eqref{error1} are roughly equal if  
\begin{align*}
  Q = (\log x)^2 +  z^{-1/6+\eps}U^{k/2}V^{(1-l)/3}.
\end{align*}
Furthermore, it follows from the assumption made in \eqref{eq:U^k>V^l} that 
\begin{align*}
		(U^k)^{-1/2}V \leq (U^{\lambda k} V^{(1-\lambda)l})^{-1/2}V
\end{align*}
for every $\lambda \in [0,1]$. With $\lambda = (l-2)/(k+l)$ the powers in $U$ and $V$ coincide, and since we had $UV \gg y$ this expression is bounded by $y^{-k\lambda/2}$, yielding 
\begin{align}\label{eq:error-init}
	N&\ll  z^{7/6+\eps}U^{-k/2} V^{2(1-l)/3}  + z^{3/2+\eps}U^{-k/2}V^{-l} + z^{1/2+\eps}y^{\frac{-k(l-2)}{2(k+l)}}.
\end{align}

In order to evaluate the terms depending on $U$ and $V$ one has to make recourse to an auxiliary bound for the quantity given in \eqref{eq:def-N}.
\begin{lem}\label{lem:alt}
		We have
		\begin{align*}
				N \ll x^{1+\eps}(U^{-k}V + U^{1-k}V^{1-l}).
		\end{align*}
\end{lem}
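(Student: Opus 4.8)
The plan is to remove the variable $j$ by observing that it is determined by the other three, and then to bound the resulting congruence count by exploiting the sparsity of $k$-th power residues to a large modulus.

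First, in \eqref{eq:def-N} the integer $j$ is fixed by $(u,v,h)$ through $\pm jv^l=n-hu^k$, and the underlying constraint $hu^k\le x$ (recall \eqref{eq:U^kH}) forces $h\le 2^kxU^{-k}$; in particular $N=0$ unless $U^k\le x$, so I may assume $xU^{-k}\ge 1$. Discarding the diagonal contribution $hu^k=n$, for which $j$ would vanish, this gives
\begin{align*}
  N\le\card\{(u,v,h): U<u\le 2U,\ V<v\le 2V,\ 1\le h\le 2^kxU^{-k},\ v^l\mid(n\mp hu^k)\}.
\end{align*}
It is precisely because the range of $h$ involves $x$ and not $x+n$ that the final bound carries a factor $x^{1+\eps}$ rather than $(x+n)^{1+\eps}$.

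I would then fix $v$ and count the pairs $(u,h)$ subject to $hu^k\equiv\pm n\pmod{v^l}$. Writing $t=(h,v^l)$, solubility forces $t\mid n$ and reduces the congruence to one of the form $u^k\equiv B\pmod{v^l/t}$ in which, after extracting from $u$ the primes dividing $v$ into the greatest common divisor $(u^k,v^l)$, the residue $B$ may be taken coprime to the modulus. The number of solutions $u$ modulo $v^l/t$ of such a power congruence is $O(v^\eps)$ uniformly, since modulo each prime power the number of $k$-th roots of a unit is at most $k$, and these combine over the $O(v^\eps)$ prime powers dividing $v^l$. Each class meets the range $(U,2U]$ in $O(Ut/v^l+1)$ integers, and since $t\mid h$ forces $t\le 2^kxU^{-k}$, summation over the $O(x^\eps)$ admissible divisors $t$ yields, for each fixed $v$, a pair count that is
\begin{align*}
  \ll x^\eps\Big(\frac{x}{U^{k-1}v^l}+\frac{x}{U^k}+1\Big).
\end{align*}

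Summing this over the $O(V)$ values of $v$ in $(V,2V]$, using $\sum_{V<v\le 2V}v^{-l}\ll V^{1-l}$ for the first term and $\sum_{V<v\le 2V}1\ll V$ for the second, and bounding the residual $V$ by $xU^{-k}V$ via $U^k\le x$, I obtain
\begin{align*}
  N\ll x^{1+\eps}\big(U^{1-k}V^{1-l}+U^{-k}V\big),
\end{align*}
which is the assertion. I expect the main obstacle to be the uniform power-residue count together with its attendant bookkeeping: one must extract the common factors of $u$ and $h$ with $v$ so as to reduce to congruences $u^k\equiv(\text{unit})$, verify that the number of solutions is genuinely $O(v^\eps)$ uniformly in the residue, and check that the greatest-common-divisor and divisor sums over $t$ cost no more than $x^\eps$. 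The two secondary terms deserve care as well: the term $x^{1+\eps}U^{-k}V$ arises entirely from moduli exceeding the range of $u$, and its correct size hinges on the elementary but essential observations $t\le h\le xU^{-k}$ and $U^k\le x$.
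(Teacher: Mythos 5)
Your proof is essentially the paper's own argument: both pass from \eqref{eq:def-N} to counting triples $(u,v,h)$ with $h\ll xU^{-k}$ and $u^kh\equiv \sigma n \pmod{v^l}$ (the variable $j$ being determined), bound the admissible $u$ for fixed $v$ and $h$ by a Hensel-type count of $O(v^\eps)$ power-residue classes, each meeting $(U,2U]$ in $O(U/M+1)$ points for the relevant modulus $M$, and then sum trivially over $h$ and $v$; your stratification of $h$ by $t=(h,v^l)$ merely makes explicit bookkeeping that the paper compresses into a single sentence. The one step you yourself flag as the main obstacle --- that the class count remains $O(v^\eps)$ with modulus genuinely of size $v^l/t$ even when the residue $B$ shares a common factor with the modulus, i.e.\ when $(n,v^l)>1$, in which case extracting that factor shrinks the modulus and correspondingly inflates the per-class count in the interval --- is treated with exactly the same brevity in the paper, whose proof disposes of it by setting $d=(n,v^l)$, invoking Hensel's Lemma for at most $k$ solutions per prime divisor of $v^l/d$, and appealing to the trivial bound $UV^{-l}$.
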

\begin{proof}
		Note that by the definition \eqref{eq:def-N} the quantity $N$ is equal to
		\begin{align}\label{eq:alt}
				N = \sum_{V< v \leq 2V} \sum_{h \leq xU^{-k}} \dsum{U< u \leq 2U}{u^kh \equiv \sigma n \mmod{v^l}} 1.
		\end{align}
    The innermost sum can be estimated via Hensel's Lemma. If $d= (n,v^l)$, the quantity in question is the number of $u \in (U,2U]$ satisfying $d|u^k$ for which $u^kh/d \equiv \sigma n/d \pmod{v^l/d}$. This congruence has at most $k$ solutions for each prime divisor of $v^l/d$, giving altogether an upper bound of $d_k(v)$. On the other hand, one has the trivial bound $UV^{-l}$. 
Inserting this into \eqref{eq:alt} yields the required bound.
\end{proof}

Notice that one has
\begin{align*}
  x^{1+\eps}U^{1-k}V^{1-l} \ll x^{1+\eps} (UV)^{1-\min\{k,l\}} \ll x^{1+\eps}y^{1-\min\{k,l\}},
\end{align*}
which is absorbed in the second term of \eqref{eq:error1}.
We obtain our bound by comparing the remaining term of \eqref{eq:alt} with the first two terms in \eqref{eq:error-init}, respectively. Observe that
\begin{align*}
 \min \big\{ z^{3/2} U^{-k/2} V^{-l}, z U^{-k} V \big\} &  \leq  z^{(3/2)\alpha + (1-\alpha) } U^{-(k/2)\alpha -(1-\alpha)k} V^{-l\alpha + (1-\alpha)} \\
& = z^{1+\alpha/2} U^{k\alpha/2 - k} V^{-(l+1)\alpha+1}.
\end{align*}
As before, it follows from \eqref{eq:U^k>V^l} that for every $\mu \in [0,1]$ one has
\begin{align*}
  U^{k\alpha/2 - k} V^{-(l+1)\alpha+1} \ll U^{\mu k(\alpha/2 - 1)} V^{(1-\mu)l(\alpha/2-1)-(l+1)\alpha+1},
\end{align*}
and on choosing $\mu=\frac{(2+\alpha)l +2(\alpha-1)}{(2-\alpha)(k+l)}$ the powers in $U$ and $V$ coincide and one obtains
\begin{align*}
& \min \big\{ z^{3/2} U^{-k/2} V^{-l}, z U^{-k} V \big\} \leq  z^{1+\alpha/2} y^{k(\alpha/2-1)\mu}.
\end{align*}
Similarly, we have
\begin{align*}
 & \min \big\{ z^{7/6 } U^{-k/2} V^{2(1-l)/3}, z U^{-k} V \big\} \\
& \quad  \leq  z^{1+\beta/6} U^{k\beta/2 - k} V^{-(2l+1)\beta/3+1}\\
& \quad \leq z^{1+\beta/6} U^{\nu k(\beta/2 - 1)} V^{(1-\nu)l(\beta/2-1)-(2l+1)\beta/3+1}
\end{align*}
and the powers of $U$ and $V$ coincide on choosing $\nu=\frac{(6+\beta)l + 2(\beta-3)}{(6-3\beta)(k+l)}$. The bound for $N$ is therefore given by
\begin{align}\label{eq:error-ab}
  N & \ll  z^{1/2+\eps}y^{\frac{-k(l-2)}{2(k+l)}} + z^{1+\alpha/2+\eps} y^{\mu k(\alpha/2-1)} +  z^{1+\beta/6+\eps} y^{\nu k(\beta/2 - 1)}.
\end{align}
Put $y=z^{\omega}$ for some suitable power $\omega<1$. This is legitimate whenever $n^\omega \leq x$, and in the opposite case the statement of the theorem is trivial. Then by inserting the values for $\mu$ and $\nu$ we see that the last two terms of \eqref{eq:error-ab} coincide if $\beta = 3 \alpha$, except possibly in the case when ${\omega=\omega_0= (k+l)/(kl+2k)}$. However, one checks that in this case the two terms coincide for all choices of $\alpha$ and $\beta$.
Also, the last term in \eqref{eq:error-ab} is roughly of size $y$ if
\begin{align*}
  \beta=-\frac{6\omega l (1+k)-6(k+l)}{\omega k(l+2)-(k+l)}.
\end{align*}

It remains to find the minimal $\omega$ for which the parameters $\alpha, \beta, \mu, \nu$ all lie in the unit interval $[0,1]$. Notice that our choice of $\beta$ implies
\begin{align*}
  \mu&=-\frac{(2+l)\omega - 3}{(2k+l+2kl)\omega - 2(k+l)},\\
  \nu&=-\frac{(2+l)\omega - 3}{(2k+3l+4kl)\omega - 4(k+l)}.
\end{align*}

The horizontal asymptotes of $\beta$, $\mu$ and $\nu$ as functions of $\omega$ are negative, so the pre-images of $[0,1]$ under each of these functions are also closed intervals. A modicum of computation shows that the endpoints of these intervals are given by
\begin{align*}
  \omega_{\beta=0}&=\frac{k+l}{kl+l}, &  \omega_{\beta=1}  &=\frac{7(k+l)}{2k+6l+7kl},\\
  \omega_{\mu=0} &=\frac{3}{l+2}, & \omega_{\mu=1}&=\frac{2(k+l)+3}{2(k+1)(l+1)},\\
\omega_{\nu=0} &=\frac{3}{l+2}, & \omega_{\nu=1}&=\frac{4(k+l)+3}{2(k+1)(2l+1)}.
\end{align*}

In order to analyse the intervals defined by these values, one has to consider various cases. The expressions for $\beta$, $\mu$ and $\nu$ are increasing functions in $\omega$ when $l>2k$ and decreasing else. Consider first the case when $l>2k$. Then one has $\omega_{\mu=1}> \omega_{\nu=1}$, so the optimal value for $\omega$ is at the lower endpoint of the interval
\begin{align*}
[\omega_{\nu=0} , \omega_{\nu=1} ]\cap [\omega_{\beta=0} ,\omega_{\beta=1} ].
\end{align*}
One checks that this is non-empty and has its lower endpoint at ${\omega_{\beta=0}=\frac{k+l}{kl+l}}$.
If, on the other hand, $l$ is smaller than $2k$, then one has $\omega_{\mu=1}< \omega_{\nu=1}$
and we want to find the lower endpoint of the interval
\begin{align*}
  [ \omega_{\nu=1},\omega_{\nu=0}  ]\cap [\omega_{\beta=1} , \omega_{\beta=0} ].
\end{align*}
This, too, is always non-empty, and one has $\omega_{\nu=1}< \omega_{\beta=1}$ if $l<(3/4)k+1$ and the opposite inequality whenever $(3/4)k+1<l<2k$.
Finally, in the case $l=2k$ all relevant values of $\omega$ coincide and consequently there is nothing to optimise.
Hence the error is given by $ z^{\omega_1(k,l)+\eps}$ with
\begin{align*}
  \omega_1(k,l) = \begin{cases}  \displaystyle{\frac{7(k+l)}{2k+6l+7kl}} & \text{ if } l \leq (3/4)k+1, \\[3mm] \displaystyle{\frac{4(k+l)+3}{2(k+1)(2l+1)}} & \text{ if } (3/4)k + 1 \leq  l \leq 2k, \\[3mm] \displaystyle{\frac{k+l}{kl+l}} & \text{ if } l \geq 2k. \end{cases}
\end{align*}

Recall now that during the analysis we possibly swapped the parameters $k$ and~$l$. Reversing that swap, we may return to the original assumption that $k\leq l$ and conclude that the optimal exponent is given by
    \begin{align*}
      \omega_2(k,l)  &=\begin{cases}
        \displaystyle{\frac{4(k+l)+3}{2(2k+1)(l+1)}}& \text{ if }  k \leq l \leq (4/3)(k-1),  \\[3mm]
         \displaystyle{\frac{7(k+l)}{6k+2l+7kl}} & \text{ if } (4/3)(k-1) \leq l.
      \end{cases}
      \end{align*}
Analogously, the first term of \eqref{eq:error-ab} is optimised by $z^{1/2+\eps}y^{-l(k-2)/(2k+2)}$ which coincides with $y^{1+\eps}$ for $y=z^{(k+l)/(2k+kl)}$, and the exponent exceeds $\omega_2(k,l)$ whenever~${l > 4k}$.
The proof is now complete on noting that the remaining terms $zy^{1-k+\eps}$ and $z^{1+\eps}y^{-l/2}$ in \eqref{eq:error1} coincide with $y$ when $y=z^{1/k}$ and $y=z^{2/(l+2)}$, respectively, both of which are dominated by the errors previously found.


\end{document}